\newcommand{\tz}{\tilde{z}}
\theoremstyle{plain}
\newtheorem{theorem}{Theorem}[section]
\newtheorem{proposition}[theorem]{Proposition}
\newtheorem{lemma}[theorem]{Lemma}
\numberwithin{theorem}{section}
\numberwithin{equation}{section}
\newcommand{\average}{{\mathchoice {\kern1ex\vcenter{\hrule height.4pt
width 6pt depth0pt} \kern-9.7pt} {\kern1ex\vcenter{\hrule
height.4pt width 4.3pt depth0pt} \kern-7pt} {} {} }}
\def\R{\mathbb{R}}
\renewcommand{\a }{\alpha }
\renewcommand{\b }{\beta }
\renewcommand{\d}{\delta }
\newcommand{\D }{\Delta }
\newcommand{\e }{\varepsilon }
\newcommand{\G }{\Gamma}
\newcommand{\n }{\nabla }
\newcommand{\vp }{\varphi }
\renewcommand{\phi}{\varphi}
\newcommand{\s }{\sigma }
\renewcommand{\th }{\theta }
\renewcommand{\O }{\Omega }
\newcommand{\ov}{\overline}
\newcommand{\be}{\begin{equation}}
\newcommand{\ee}{\end{equation}}
\newcommand{\de}{\partial}
\newcommand{\ti}{\widetilde}
\newcommand{\ra}{{\rangle}}
\newcommand{\la}{{\langle}}
\newcommand{\calC }{\mathcal{C}}
\newcommand{\calD }{\mathcal{D}}
\newcommand{\N}{\mathbb{N}}
\renewcommand{\epsilon}{\varepsilon}
\begin{document}
 
\author{El Hadji Abdoulaye THIAM} 
\title[The Role of the Mean Curvature in a Mixed Hardy-Sobolev Trace Inequality]
{The Role of the Mean Curvature in a Mixed Hardy-Sobolev Trace Inequality}
\maketitle
\begin{abstract}
Let $\O$ be a smooth bounded domain of $\R^{N+1}$  of boundary $\de \O= \G_1 \cup \G_2$ and such that $\de \O \cap \G_2$ is a neighborhood of $0$, $h \in \calC^0(\de \O \cap \G_2) $ and $s \in [0,1)$. We propose to study existence of positive solutions to  the following Hardy-Sobolev trace problem with mixed boundaries conditions
\begin{align}
\begin{cases}
\displaystyle \D u= 0& \qquad \textrm{ in } \O\vspace{2mm}\\
\displaystyle u=0 & \qquad \textrm{ on } \G_1 \vspace{2mm}\\
\displaystyle  \frac{\de u}{\de \nu}=h(x) u + \frac{u^{q(s)-1}}{d(x)^{s}}  & \qquad \textrm{ on } \G_2,
\end{cases}
\end{align}
where $q(s):=\frac{2(N-s)}{N-1}$ is the critical Hardy-Sobolev trace exponent and $\nu$ is the outer unit normal of $\de \O$.
  In particular, we prove existence of minimizers when $N \geq 3$ and the mean curvature is sufficiently below the potential $h$ at $0$.
\end{abstract} 
\bigskip
\noindent
\textit{}
\noindent
\textit{Key words}: Hardy-Sobolev trace inequality, Mixed problem, Mean curvature, existence of minimizers.
\section{Introduction and main result}\label{Introduction}
For $N \geq 2$ and $s\in [0,1]$, we consider the Hardy-Sobolev trace best constant:
\be\label{eq:CKNtrace} 
S_{N,s}=
\inf_{u\in \calD}\frac{\displaystyle \int_{  \R^{N+1}_+}|\n u|^2 dz }
{\displaystyle \left(\int_{ \de \R^{N+1}_+}|x|^{-s}|u|^{q(s)} d x \right)^{\frac{2}{q(s)}}},
\ee
where $q(s)=\frac{2(N-s)}{N-1}$ is the Hardy-Sobolev trace exponent, see for instance \cite{FMT} and also \cite{CSilv} for generalizations. Here and in the following, we denote by 
$$
\R^{N+1}_+=\left\{z=(z_1,x)\in \R^{N+1}\quad:\quad z_1>0 \right\}
$$
with boundary $\de\R^{N+1}_+=\R^N\times\{0\}\equiv \R^N$.
We denote  and henceforth define $\calD:=\calD^{1,2}(\ov{\R^{N+1}_+})$ the completion of $C^\infty_c(\ov{\R^{N+1}_+} )$  with respect to the norm
$$
u \mapsto \left(\int_{\R^{N+1}_+}|\n u|^2dz\right)^{1/2}.
$$
Note that for $s=0$ then $q(0)=:2^{\sharp}$ is the critical Sobolev trace exponent  while  $S_{N,0}$ coincides with the  Sobolev trace constant studied by Escobar \cite{Esc} and Beckner \cite{Beck} with applications in the Yamabe problem with prescribed mean curvature.
Existence of cylindrical symmetric decreasing minimizers for the quotient  $S_{N,s}$ in \eqref{eq:CKNtrace} were obtained by Lieb [\cite{Lieb}, Theorem 5.1]. If $s=1$, we recover $S_{N,1}=2 \frac {\Gamma^2({\frac{N+1}{4}}) }{\Gamma^2(\frac{N-1}{4})}$, the relativistic Hardy constant (see e.g. \cite{Herbst}) which is never achieved in $\calD$. In this case, it is expected that there is no influence of the curvature in
comparison with the   works on Hardy inequalities with singularity at the boundary or in Riemannian manifolds, see \cite{Fall, Elaj, Elage2, Elage3, Elage4}. \\

We consider a smooth domain $\O$ of $\R^{N+1}$, with boundary $\de \O=\G_1 \cup \G_2$ and such that $\de\O \cap \G_2$ is a smooth neighborhood of $0$. Given $h \in \calC^0\left(\G_2 \cap \de \O\right)$, we suppose the following argument of coercivity: there exists a positive constant $C$, depending on $\O$, such that
\begin{equation}\label{Coercivity}
\int_{\O} |\n u|^2 dz+\int_{\G_2} h(x) u^2 dx \geq C \left(\int_\O |\n u|^2 dz+\int_\O u^2 dz\right) \quad \forall u \in H^1(\O).
\end{equation}
In [\cite{FMT}, Lemma 2.4] authors showed
the existence of  a constant $C_1(\O)>0$ such that
the following inequality holds
$$
C_1(\O){\left(\int_{\de\O}d(x)^{-s} |u|^{q(s)}dx\right)^{\frac{2}{q(s)}}} \leq \int_{\O}|\n u|^2dz
+\int_{\O}| u|^2dz \qquad \forall u\in H^1(\O)
$$
where $d(x):=dist_{\de \O}(0,x)$ is  the Riemannian distance on the boundary $\de \O$ of $\O$.
Then by \eqref{Coercivity} we have the existence of a positive constant $C(\O)$ depending on $\O$ such that
\be\label{Lundi}
C(\O){\left(\int_{\G_2}d(x)^{-s} |u|^{q(s)}dx\right)^{\frac{2}{q(s)}}} \leq \int_{\O}|\n u|^2dx
+\int_{\G_2}h(x) u^2dx \qquad \forall u\in H^1(\O).
\ee

Our aim in this paper is to study the existence of minimizers for the following mixed Hardy-sobolev trace quotient:
\be\label{eq:gOdef-i}
\mu_{s}(\O):= \inf_{u\in H^1(\O)\setminus \lbrace0\rbrace}\frac{  \displaystyle  \int_{\O}|\n u|^2dx
+\int_{\G_2}h(x)| u|^2dx         }{  \displaystyle  \left(\int_{\G_2}d(x)^{-s} |u|^{q(s)}dx\right)^{\frac{2}{q(s)}}},
\ee
for $s\in[0,1)$.
%
Our main  result is the following
\begin{theorem}\label{Theorem1.1}
Let $N \geq 3$, $\O$ be a bounded smooth domain of  $\R^{N+1}_+$ of boundary $\de \O= \G_1 \cup \G_2$ such that $0 \in \de \O \cap \G_2$, $s \in [0,1)$ and  $h \in \calC^0(\de \O \cap \G_2)$.
We let $w$ to be the ground state solution of the Hardy-Sobolev trace best constant $S_{N,s}.$ We assume that the mean curvature of the boundary $H_{\de \O}$ satisfies 
\begin{equation}\label{MeanCurvature}
\left(\frac{N-2}{2 N}+\frac{1}{N}\frac{\displaystyle\int_{\R^{N+1}_+} z_1 \left|\frac{\de w}{\de z_1}\right|^2 dz}{\displaystyle \int_{\R^{N+1}_+} z_1 |\n w|^2 dz}\right)H_{\de \O} (0) +h(0)<0.
\end{equation}
Then $\mu_{s}(\O) < S_{N,s}$ and $\mu_{s}(\O)$ is achieved by a positive function $u \in H^1(\O)$ satisfying
 \begin{align}
 \begin{cases}
\displaystyle \D u= 0& \qquad \textrm{ in } \O\vspace{2mm}\\
\displaystyle u=0 & \qquad \textrm{ on } \G_1 \vspace{2mm}\\
\displaystyle  \frac{\de u}{\de \nu}=h(x) u +d(x)^{-s} u^{q(s)-1}  & \qquad \textrm{ on } \G_2,
 \end{cases}
 \end{align}
 where $\nu$ is the unit outer normal of $\de \O$.
\end{theorem}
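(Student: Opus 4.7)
The plan is to follow the classical two-step concentration-compactness strategy. First I would establish a dichotomy: any minimizing sequence for $\mu_{s}(\O)$ is either relatively compact in $H^1(\O)$, producing a non-trivial minimizer, or it concentrates at the point $0\in\G_2$ where the weight $d(x)^{-s}$ is singular, and in the latter case the Rayleigh quotient cannot tend below $S_{N,s}$. This is standard given the coercivity \eqref{Coercivity}, the trace embedding \eqref{Lundi}, and the fact that outside a small neighborhood of $0$ the weight $d(x)^{-s}$ is bounded and the trace embedding $H^1(\O)\embed L^{q(s)}(\G_2)$ is compact. The entire statement therefore reduces to proving the strict inequality $\mu_{s}(\O)<S_{N,s}$: positivity then follows by taking $|u|$ and applying the strong maximum principle to the harmonic minimizer, and the boundary value problem is just the Euler--Lagrange equation of \eqref{eq:gOdef-i} after rescaling to normalize the trace integral to one.

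The core of the argument is the construction of a suitable test function. Let $w$ be the positive cylindrically symmetric ground state of $S_{N,s}$ on $\R^{N+1}_+$, normalized so that $\int_{\de\R^{N+1}_+}|x|^{-s}w^{q(s)}\,dx=1$, and set $w_{\e}(z)=\e^{-(N-1)/2}w(z/\e)$. I would flatten $\G_2$ near $0$ by a smooth Fermi-type diffeomorphism $F$ with $F(0)=0$ and $DF(0)=\iden$, so that $\O\cap U\simeq B^+_{r_0}$ and $\G_2\cap U\simeq\{z_1=0\}$. Writing $g=DF^\top DF$ and $d(F(x))$ for the geodesic distance from $0$ on $\G_2$, the standard Fermi expansions give
\be\label{fermiexp}
\sqrt{\det g}(z)=1-H_{\de\O}(0)\,z_1+O(|z|^2),\qquad g^{ij}(z)=\delta^{ij}+O(|z|),\qquad d(F(x))=|x|\bigl(1+O(|x|^2)\bigr).
\ee
The test function would be $u_\e=\eta\cdot(w_\e\circ F^{-1})$, with $\eta$ a smooth cutoff equal to one near $0$. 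Computing each term of the Rayleigh quotient at $u_\e$ with the help of \eqref{fermiexp}, rescaling $z\mapsto\e z$, and using the decay $w(z)=O(|z|^{-(N-1)})$ to absorb remainders, I would aim at the expansion
\[
\frac{\int_\O|\n u_\e|^2\,dz+\int_{\G_2}h\,u_\e^2\,dx}{\bigl(\int_{\G_2}d^{-s}\,u_\e^{q(s)}\,dx\bigr)^{2/q(s)}}=S_{N,s}+c_{N,s}\,\Bigl[\mathcal{A}(w)\,H_{\de\O}(0)+h(0)\Bigr]\,\e+o(\e),
\]
with $c_{N,s}>0$ universal and $\mathcal{A}(w)=\dfrac{N-2}{2N}+\dfrac{1}{N}\,\dfrac{\int_{\R^{N+1}_+} z_1\,|\de_{z_1}w|^2\,dz}{\int_{\R^{N+1}_+} z_1\,|\n w|^2\,dz}$. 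Assumption \eqref{MeanCurvature} then makes the $\e$-coefficient strictly negative, hence $\mu_{s}(\O)<S_{N,s}$ for $\e$ small.

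The main obstacle I anticipate is the precise identification of $\mathcal{A}(w)$. Contributions of order $\e$ arise simultaneously from four sources: the volume form $\sqrt{\det g}$, the curvature correction in the Dirichlet form through $g^{ij}$, the surface element on $\G_2$, and the expansion of $d(x)^{-s}$ around $|x|^{-s}$. Each of them produces a weighted integral of $w$ or its gradient paired with $z_1$, and to recombine them into the single ratio appearing in \eqref{MeanCurvature} one must integrate by parts against the Euler--Lagrange equation satisfied by $w$ on $\R^{N+1}_+$ and exploit the cylindrical symmetry of $w$; the term $h(0)$ is obtained by using $h(x)=h(0)+O(|x|)$ in the quadratic trace term. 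Uniform control of the remainder $o(\e)$ in terms of the cutoff $\eta$ depends on the precise decay of $w$ at infinity. Once this bookkeeping is settled, a minimizing sequence stays compact in $H^1(\O)$, its weak limit $u$ is a non-trivial minimizer, and standard elliptic regularity up to the boundary combined with the strong maximum principle on $\G_2$ yield positivity and the claimed boundary value problem.
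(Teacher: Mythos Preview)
Your proposal is correct and follows essentially the same route as the paper: the same Fermi-coordinate test function $u_\e$, the same expansion of the Rayleigh quotient leading to the coefficient $\mathcal{A}(w)$ (obtained, as you anticipate, by integrating by parts against the Euler--Lagrange equation for $w$ and using its radial symmetry in $x$), and the same blow-up argument to rule out loss of compactness once $\mu_s(\O)<S_{N,s}$. One small caveat: your concentration dichotomy relies on the singularity of $d(x)^{-s}$ to force concentration at $0$, which breaks down when $s=0$; the paper handles that endpoint separately, using the sharp Sobolev trace inequality of Li--Zhu together with the Brezis--Lieb lemma instead of the blow-up localization.
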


We mention that the study of the effect of the curvature in the Hardy-Sobolev trace inequality seems to be quite rare in the literature, see for instance the paper of the author with Fall and Minlend \cite{FMT}. While the Sobolev  ($\s=0$) inequality have been  intensively studied in the last years. Our argument of proof is based on blow up analysis (see Proposition \ref{prop:exist_smO}). The paper is organized as follows. in Section \ref{Sect1}, we recall some geometric results. In Section  \ref{Sect2}, we compare the two Hardy-Sobolev trace inequalities in order to get the existence of minimizers, see Section \ref{Sect3}. Section \ref{Sect4} is devoted to the proof of our main result.

\section{Preliminaries}\label{Sect1}
We let   $E_i$, ${i=2,\dots,N+1}$ be an  orthonormal basis of $ T_0\de\O$, the tangent plane of $\de \O$ at 0. 
 We will consider the Riemaninan manifold
 $(\de\O,\ti{g})$
where $\ti{g}$ is the Riemannian metric  induced by $\R^{N+1} $ on $\de \O$.
We  first introduce  geodesic normal coordinates in a neighborhood (in $\de\O$) of
$0$ with coordinates $ y'=(y^2,\dots,y^{N+1})\in\R^N.$ We set
$$
f(y'):=\textrm{Exp}_{0}^{\de\O}\left(\sum_{i=2}^{N+1}y^iE_i\right).
$$
 It is clear that the geodesic distance $d$ of the boundary $\de \O$ satisfies
\be\label{eq:dftiy}
d({f(\ti{y})})=|\ti{y}|.
\ee
In addition the above  choice of coordinates induces coordinate vector-fields on $\de\O$:
 $$
Y_i(y')=f_{*}(\partial_{y^i}),\quad \textrm{ for } i=2,\dots,N+1.
$$
Let $\ti{g}_{ij}=\la Y_i,Y_j\ra $, for $i,j=2,\dots,N+1$, be the component of the metric $\ti{g}$.
We have near the origin 
$$
\ti{g}_{ij}=\delta_{ij}+O(|y|^2).
$$
We denote by $ N_{\de\O}$ the  unit normal vector field along $\de\O$ interior to $\O$. Up to rotations,
 we will assume that $N_{\de\O}(0)=E_1 $.
 For any vector field $Y$ on $T\de\O$, we define $H(Y)=d N_{\de\O}[Y]$.
The mean curvature  of $\de\O$ at 0 is given by 
$$
H_{_{\de\O}}(0)=\sum_{i=2}^{N+1}\la H(E_i), E_i\ra.
$$
 Now consider a local parametrization of a neighbourhood of $0$ in $ \R^{N+1}$ defined as
$$
F(y):={f(\ti{y})}+y^{1}N_{\de\O}(f(\ti{y})),\qquad y=(y^1,\ti{y})\in B_{r_0},
$$
where $B_{r_0} $ is a small ball centred at 0.
This yields the coordinate vector-fields in $\R^{N+1} $,
\begin{eqnarray*}
Y_i(y)&:=&F_{*}(\partial_{y^i}) \qquad i=1,\dots,N+1.
 \end{eqnarray*}
Let  $g_{ij}=\la Y_i,Y_j\ra $, for $i,j=1,\dots,N+1$, be the component of the flat metric $g$.\\
We have the following (See for instance \cite{FMT})
\begin{lemma}\label{MaMetric}
For $i, j=2,...,N+1$, Taylor expansion of the metric $g$ yields
\begin{eqnarray*}
g_{ij}&=& \delta_{ij}+2\langle H(E_i),E_j\rangle y^{1}+ O(|y|^2); \\[3mm]
g_{i1}&=& 0;\\[3mm]
g_{11}&=&1.
\end{eqnarray*}
\end{lemma}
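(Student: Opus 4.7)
The plan is a direct Taylor expansion in the Fermi-type parametrization $F(y) = f(\tilde y) + y^1 N_{\partial\Omega}(f(\tilde y))$. Differentiating in the normal direction gives $Y_1 = N_{\partial\Omega}(f(\tilde y))$, a unit vector by definition, so $g_{11} = \langle Y_1, Y_1\rangle = 1$ identically, not merely to leading order. For the tangential directions,
\[
Y_i = f_{*}(\partial_{y^i}) + y^1 \, \partial_{y^i}\bigl[N_{\partial\Omega}(f(\tilde y))\bigr], \qquad i=2,\dots,N+1.
\]
For the mixed components I would use that $f_{*}(\partial_{y^i}) \in T\partial\Omega$ is orthogonal to $N_{\partial\Omega}$, while differentiating $|N_{\partial\Omega}|^2 \equiv 1$ forces $\partial_{y^i} N_{\partial\Omega}$ also to be tangent to $\partial\Omega$. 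Both contributions are therefore perpendicular to $Y_1 = N_{\partial\Omega}$, so $g_{i1} \equiv 0$ in the whole coordinate neighbourhood; this is an exact identity, not just a leading-order one.

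For $g_{ij}$ with $i,j \geq 2$, I would expand
\[
\langle Y_i, Y_j\rangle = \langle f_{*}\partial_{y^i}, f_{*}\partial_{y^j}\rangle + y^1\bigl(\langle f_{*}\partial_{y^i}, \partial_{y^j}N_{\partial\Omega}\rangle + \langle \partial_{y^i}N_{\partial\Omega}, f_{*}\partial_{y^j}\rangle\bigr) + (y^1)^2 \langle \partial_{y^i}N_{\partial\Omega}, \partial_{y^j}N_{\partial\Omega}\rangle.
\]
The first summand is $\tilde g_{ij}(\tilde y) = \delta_{ij} + O(|\tilde y|^2)$ by the geodesic normal coordinates recalled just before the lemma. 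At the origin $f_{*}\partial_{y^i}|_0 = E_i$ and $\partial_{y^i} N_{\partial\Omega}|_0 = dN_{\partial\Omega}[E_i] = H(E_i)$ by the very definition of $H$, so the linear-in-$y^1$ coefficient evaluates at $\tilde y = 0$ to $\langle E_i, H(E_j)\rangle + \langle H(E_i), E_j\rangle = 2\langle H(E_i), E_j\rangle$ using the symmetry of the second fundamental form. Collecting the three pieces gives $g_{ij} = \delta_{ij} + 2\langle H(E_i), E_j\rangle y^1 + O(|y|^2)$, as claimed.

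The only delicate point, and the main bookkeeping obstacle I anticipate, is justifying that the coefficient of $y^1$ may be replaced by its value at the origin up to an error absorbed into $O(|y|^2)$. The Weingarten-type expression $\langle f_{*}\partial_{y^i}, \partial_{y^j}N_{\partial\Omega}\rangle + \langle \partial_{y^i} N_{\partial\Omega}, f_{*}\partial_{y^j}\rangle$ is smooth in $\tilde y$ and equals $2\langle H(E_i), E_j\rangle$ at $\tilde y = 0$, so its deviation contributes $O(y^1 |\tilde y|) = O(|y|^2)$; the same bound obviously holds for the $(y^1)^2$ term. This yields the stated expansion and completes the verification of all three formulas.
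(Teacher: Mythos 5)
Your proof is correct and complete. The paper itself does not supply a proof of Lemma \ref{MaMetric} but merely cites \cite{FMT}; your argument is precisely the standard Fermi-coordinate computation one would find there: $Y_1=N_{\de\O}$ is unit so $g_{11}\equiv1$; tangentiality of $f_*\de_{y^i}$ together with $\de_{y^i}|N_{\de\O}|^2=0$ gives $g_{i1}\equiv0$; and the quadratic expansion of $\la Y_i,Y_j\ra$ in $y^1$, combined with $\ti g_{ij}=\d_{ij}+O(|\ti y|^2)$ from geodesic normal coordinates, the identity $\de_{y^i}[N_{\de\O}\circ f]|_0=dN_{\de\O}[E_i]=H(E_i)$, and self-adjointness of the Weingarten map, yields the stated $2\la H(E_i),E_j\ra y^1$ term. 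Your closing remark correctly accounts for why freezing the linear coefficient at the origin only costs $O(|y|^2)$, which is the one bookkeeping step that is easy to gloss over.
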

%
%
We will need the following result proved by Fall-Minlend-Thiam [\cite{FMT}, Theorem 2.1, Theorem 2.2]. Then we have
\begin{lemma}\label{Lemma111}
 Let $s\in(0,1)$.   Then, for $z=(z_1, x) \in \R_+^* \times \R^N$, $S_{N,s}$ has a positive minimizer $w\in \calD$ that satisfies
\begin{align}\label{***}
\begin{cases}
\displaystyle \D w =0&\qquad \textrm{ in } \R^{N+1}_+, \vspace{3mm}\\
\displaystyle \frac{\de w}{\de \nu}=S_{N,s} w^{q(s)-1}|x|^{-s} &\qquad \textrm{ on } \R^N, \vspace{3mm}\\
\displaystyle\int_{\R^{N}} |x|^{-s} w^{q(s)}\,dx=1&
\end{cases}
\end{align}
where $\nu$ is the outer unit normal of $\de \O.$
Moreover we have:\\
(i) \,$w=w(z)$ only depends on $z^1$ and $|x|$, and $w$ is
strictly decreasing in $|x|$.\\
\noindent
(ii) $w(z)\leq \displaystyle\frac{C}{1+|z|^{N-1}}$ for all $z\in \ov{\R^{N+1}_+},$ for some positive constant  $C$.
\end{lemma}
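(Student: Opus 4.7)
The plan is to obtain $w$ by the direct method in the scale-invariant space $\calD$, extract the cylindrical symmetry from Schwarz rearrangement in the $x$-variable, and derive the sharp decay via Moser iteration combined with the Kelvin transform on the half-space.

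First, I would take a minimizing sequence $\{u_n\}\subset\calD$ normalized by $\int_{\R^N}|x|^{-s}|u_n|^{q(s)}\,dx=1$. Since $|x|^{-s}$ is symmetric decreasing on $\R^N$, Schwarz rearrangement of each slice $u_n(z^1,\cdot)$ increases the denominator by the Hardy-Littlewood inequality, while the Polya-Szego inequality on $\R^{N+1}_+$ keeps the Dirichlet integral non-increasing. Thus $u_n$ may be assumed cylindrically symmetric about the $z^1$-axis and symmetric decreasing in $|x|$, which will eventually yield (i). I then use the dilation invariance $u\mapsto\lambda^{(N-1)/2}u(\lambda\cdot)$ to pin the scale, e.g.\ by requiring $\int_{B_1\cap\R^N}|x|^{-s}|u_n|^{q(s)}\,dx=\tfrac12$, so that mass cannot escape to scale $0$ or $\infty$. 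The sequence is bounded in $\calD$ and converges weakly to some $w$. The weight $|x|^{-s}$ breaks translation invariance in $x$, so Lions' concentration-compactness principle applied to the singular measures $|x|^{-s}|u_n|^{q(s)}\,dx$ confines any possible concentration to the fixed point $x=0$; vanishing and concentration are ruled out by the fixed scale, while dichotomy is excluded by the strict subadditivity $\lambda^{2/q(s)}+(1-\lambda)^{2/q(s)}>1$ valid for $q(s)>2$. Hence $u_n\to w$ in the weighted $L^{q(s)}$-norm and $w$ attains $S_{N,s}$; the stated Euler-Lagrange system is then the stationarity condition, with the Lagrange multiplier absorbed by a final rescaling.

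For positivity and strict monotonicity, I would replace $w$ by $|w|$ (which preserves the quotient), then invoke the strong maximum principle in $\R^{N+1}_+$ and the Hopf lemma on the flat boundary to deduce $w>0$ on $\ov{\R^{N+1}_+}\setminus\{0\}$; strict decrease in $|x|$ follows from the equality case of Polya-Szego, or alternatively from the moving-plane method on hyperplanes containing the $z^1$-axis (which is legitimate because $|x|^{-s}$ is invariant under those reflections). For the decay (ii), I would first upgrade to $w\in L^\infty(\ov{\R^{N+1}_+})$ by a Brezis-Kato / Moser iteration adapted to the critical boundary nonlinearity, using that the hypothesis $s<1$ keeps each iteration step subcritical. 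Then the Kelvin transform $\tilde w(z):=|z|^{-(N-1)}w(z/|z|^2)$ preserves harmonicity on $\R^{N+1}_+$, preserves the boundary $\{z_1=0\}$, and --- via the exponent identity $(N-1)(q(s)-1)+s-(N+1)=-s$ --- maps the boundary condition into itself, giving $\partial_\nu\tilde w=S_{N,s}|x|^{-s}\tilde w^{q(s)-1}$. Applying the same $L^\infty$ bound to $\tilde w$ near the origin translates into $w(z)\lesssim|z|^{-(N-1)}$ as $|z|\to\infty$, which together with the global $L^\infty$ bound yields (ii).

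The main obstacle is the compactness step: the Hardy-Sobolev trace quotient is non-compact both at the origin (the singularity) and at infinity, and these two scales are linked by dilation invariance. The delicate point is to exploit the fact that $|x|^{-s}$ destroys translation invariance in $x$, forcing any concentration to occur at $0$, and then to combine a careful normalization of the Levy concentration function with the strict subadditivity of $\mu\mapsto\mu^{2/q(s)}S_{N,s}$ to exclude that concentration and close the argument.
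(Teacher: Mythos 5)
You should be aware that the paper does not actually prove Lemma~\ref{Lemma111}: it is imported verbatim from Fall--Minlend--Thiam \cite{FMT}, with the existence of cylindrically symmetric decreasing minimizers further credited to Lieb \cite{Lieb}. The only proof indications the paper gives are the closing remarks that (i) is obtained in \cite{FMT} by the moving-plane method and that (ii) ``is a direct consequence'' of the Kelvin invariance of~\eqref{***}. Your reconstruction is consistent with those sources and is essentially correct, but it differs in emphasis. For existence you run a Schwarz-rearrangement-plus-Lions concentration-compactness argument; Lieb's original proof instead extracts compactness directly from the pointwise decay that cylindrical symmetric decreasing functions in $\calD$ automatically enjoy (a Strauss-type radial lemma), once the dilation scale has been fixed --- concentration-compactness is not strictly needed there, though of course it works. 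Note also that slicewise Schwarz symmetrization controls the full Dirichlet energy $\int(\partial_{z^1}u)^2+\int|\nabla_xu|^2$ only because rearrangement is a contraction in $L^2$ on slices (so $\int(\partial_{z^1}u^*)^2\le\int(\partial_{z^1}u)^2$) in addition to the ordinary P\'olya--Szeg\H o inequality on each slice; you should say this rather than invoke ``P\'olya--Szeg\H o on $\R^{N+1}_+$'' as a black box. For strict decrease in $|x|$, the equality case of P\'olya--Szeg\H o is delicate and not by itself decisive; the moving-plane route, which you offer as the alternative and which is what \cite{FMT} actually uses, is the robust one. Finally, your treatment of (ii) --- a Brezis--Kato/Moser iteration to reach $L^\infty$ followed by the Kelvin transform $\tilde w(z)=|z|^{-(N-1)}w(z/|z|^2)$, whose exponent identity $(N-1)(q(s)-1)=N+1-2s$ you verify correctly --- is in fact more careful than the paper's one-line remark, since Kelvin symmetry alone transfers a bound near $0$ into a bound near $\infty$ only after the a~priori local boundedness has been established. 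In short: correct, and filling in details the paper leaves to its references, with only the bookkeeping points above to tighten.
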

In \cite{FMT}, authors used the moving plane method to prove (i). Moreover (ii) is a direct consequence of the fact that the system \eqref{***} is invariant under Kelvin transformation.
\section{Comparing  $ \mu_s(\O)$ and $ S_{N,s}$}\label{Sect2}
In this section, we construct a test function for the Hardy-Sobolev trace best constant $\mu_s(\O)$ in order to compare it with $S_{N,s}$. Then we recall
$$
\mu_s(\O)=\inf_{u \in H^1(\O)\setminus \lbrace 0\rbrace} J(u),
$$ 
where the function $J$ is given by
$$
J(u)=\frac{\displaystyle\int_\O |\n u|^2 dz+\int_{\G_2} h u^2 dx}{\left(\displaystyle\int_{\G_2} d(x)^{-s} |u|^{q(s)} dx\right)^{2/q(s)}}.
$$
Let $w \in \calD$ be the positive ground state solution (positive minimizer that satisfy the corresponding Euler-Lagrange equation) given by Lemma \ref{Lemma111} normalized so that
\be\label{Normalized}
\int_{\de \R^{N+1}_+} |x|^{-s} w^{q(s)} dx=1\qquad \textrm{and} \qquad S_{N,s}=\int_{\R^{N+1}_+} |\n w|^2 dz.
\ee
We let $\e>0$. For $r_0>0$ small fix, we define 
$$
v_\e\left(F(z)\right)=\e^{\frac{1-N}{2}} w\left(\frac{z}{\e}\right), \qquad z=(y_1,x) \in \overline{B^+_{r_0}}.
$$
Let $\eta \in \calC^\infty_c\left(F(B_{r_0})\right)$ such that 
$\eta\equiv 1$ in $F(B_{r_0/2})$
and
$0\leq \eta \leq 1 $ in $\R^{N+1}_+.$ Then we define the test function by
\be
u_\e\left(F(z)\right)=\eta\left(F(z)\right) v_\e\left(F(z)\right).
\ee
We have the following expansion.
\begin{lemma}\label{ExpansionJ}
For all $N \geq 2$, we have
\begin{equation}\label{FinaleFinaleFinale1}
\begin{array}{ll}
\displaystyle J\left(u_\e\right)=\int_{\R^{N+1}_+} |\n w|^2 dz+\e H_{\de \O}(0) \int_{B^+_{r/\e}} z_1 |\n w|^2dz&\displaystyle -\frac{2}{N}\e H_{\de \O}(0) \int_{B^+_{r/\e}} z_1 |\n_x w|^2dz\\\\
&\displaystyle h(0)\e \int_{\de B^+_{r/\e}} w^2(x)dx
+O\left(\rho(\e)\right)
\end{array}
\end{equation}
where
\begin{equation}\label{Errorrr}
\begin{array}{ll}
& \displaystyle\rho(\e)=\e^2 \int_{B^+_{r/\e}} |x|^2 |\n_x w|^2 dz+\e^3 \int_{\de B^+_{r/\e}}|x|^2 w^2(x)dx+\e \int_{\de B^+_{r/\e}\setminus \de B^+_{r/2\e}} w^2(x) dx\\\\
&\displaystyle+\e^2 \int_{B^+_{r/\e}\setminus B^+_{r/2\e}} w^2(z) dz+\int_{\de \R^{N+1}_+\setminus \de B^+_{r/\e}} |x|^{-s} w^{q(s)}dx+\e^2 \int_{\de B^+_{r/\e}} |x|^{2-s} w^{q(s)} dx+\int_{\R^{N+1}_+\setminus B^+_{r/2\e}} |\n w|^2 dz.
\end{array}
\end{equation}
\end{lemma}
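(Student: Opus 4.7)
The plan is to pull the three integrals defining $J(u_\e)$ back to the parameter domain $B_{r_0}\subset \R^{N+1}$ through $F$, invoke the metric expansions of Lemma \ref{MaMetric}, rescale by $y=\e z$, and absorb every remainder into $\rho(\e)$.

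For the Dirichlet term, the product rule splits $|\n u_\e|^2$ into the main piece $\eta^2|\n v_\e|_g^2$ and cutoff cross terms supported on the annulus $F(B_{r_0}\setminus B_{r_0/2})$ that, after rescaling, feed the contributions $\e^2\int_{B^+_{r/\e}\setminus B^+_{r/2\e}} w^2\,dz$ and $\int_{\R^{N+1}_+\setminus B^+_{r/2\e}}|\n w|^2\,dz$ into $\rho(\e)$. Lemma \ref{MaMetric} gives
\[
g^{ij}\de_i v_\e\,\de_j v_\e\,\sqrt{\det g}=|\n v_\e|^2+H_{\de\O}(0)\,y^1|\n v_\e|^2-2\sum_{i,j=2}^{N+1}\la H(E_i),E_j\ra y^1\,\de_i v_\e\,\de_j v_\e+O(|y|^2|\n v_\e|^2),
\]
and the change of variables $y=\e z$ turns the first three pieces into $\int|\n w|^2\,dz$, $\e H_{\de\O}(0)\int z_1|\n w|^2\,dz$, and a cross-term that I would reduce using Lemma \ref{Lemma111}(i): the cylindrical symmetry of $w$ in the tangential variables forces $\int z_1\,\de_i w\,\de_j w\,dz=\delta_{ij}\tfrac{1}{N}\int z_1|\n_x w|^2\,dz$, whence $\sum_{i,j}\la H(E_i),E_j\ra\int z_1\,\de_i w\,\de_j w\,dz=\tfrac{1}{N}H_{\de\O}(0)\int z_1|\n_x w|^2\,dz$ and the coefficient $-\tfrac{2}{N}$ in \eqref{FinaleFinaleFinale1} emerges. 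The quadratic metric remainder contributes $\e^2\int|x|^2|\n_x w|^2\,dz$ to $\rho(\e)$.

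For the potential term $\int_{\G_2}h\,u_\e^2\,dx$, restricting to $y^1=0$ gives the boundary measure $(1+O(|\ti y|^2))\,d\ti y$, and rescaling $\ti y=\e x$ produces an overall factor $\e$. Taylor expanding $h(f(\e x))=h(0)+O(\e|x|)$ isolates the leading term $\e h(0)\int_{\de B^+_{r/\e}}w^2\,dx$, while the $O(|\ti y|^2)$ metric correction and the cutoff annulus generate the remaining $\e^3\int|x|^2 w^2\,dx$ and $\e\int_{\de B^+_{r/\e}\setminus\de B^+_{r/2\e}}w^2\,dx$ terms of $\rho(\e)$. For the denominator, using $d(f(\ti y))=|\ti y|$ from \eqref{eq:dftiy} together with the exact scaling built into $q(s)=2(N-s)/(N-1)$ yields
\[
\int_{\G_2}d(x)^{-s}u_\e^{q(s)}\,dx=\int_{\R^N}|x|^{-s}w^{q(s)}\,dx+O(\rho(\e))=1+O(\rho(\e))
\]
by the normalization \eqref{Normalized}, where the error collects the tail $\int_{\R^N\setminus\de B^+_{r/\e}}|x|^{-s}w^{q(s)}\,dx$, the metric remainder $\e^2\int_{\de B^+_{r/\e}}|x|^{2-s}w^{q(s)}\,dx$, and the cutoff. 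Raising to the $2/q(s)$ power via $(1+a)^{2/q(s)}=1+O(a)$ and dividing the numerator expansion by $1+O(\rho(\e))$ absorbs the resulting $S_{N,s}\,O(\rho(\e))$ into $\rho(\e)$, producing \eqref{FinaleFinaleFinale1}.

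The chief difficulty is bookkeeping: checking that every lower-order remainder --- from the quadratic metric tail, the derivatives of the cutoff $\eta$ on $B_{r_0}\setminus B_{r_0/2}$, the Taylor expansion of $h$, the domain truncation, and the binomial expansion of the denominator --- fits under one of the six terms listed in \eqref{Errorrr}. The conceptually most delicate step is the symmetry reduction of the shape-operator sum in the Dirichlet expansion, since this algebraic cancellation is precisely what produces the geometric coefficient $\tfrac{N-2}{2N}+\tfrac{1}{N}\,\bigl(\int z_1(\de w/\de z_1)^2\,dz\bigr)\big/\bigl(\int z_1|\n w|^2\,dz\bigr)$ that later appears in \eqref{MeanCurvature} through the identity $\int z_1|\n w|^2\,dz=\int z_1(\de w/\de z_1)^2\,dz+\int z_1|\n_x w|^2\,dz$.
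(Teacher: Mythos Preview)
Your proposal is correct and follows the same route as the paper: split off the cutoff remainder supported on the annulus, pull back through $F$, expand $g^{ij}$ and $\sqrt{|g|}$ via Lemma \ref{MaMetric}, rescale $y=\e z$, write the denominator as $1+O(\rho_2(\e))$ via \eqref{eq:dftiy} and \eqref{Normalized}, and divide. You are in fact more explicit than the paper on the symmetry reduction $\sum_{i,j}H_{ij}\int z_1\,\de_i w\,\de_j w\,dz=\tfrac{1}{N}H_{\de\O}(0)\int z_1|\n_x w|^2\,dz$ (the paper simply records the coefficient $-2/N$ without comment); the one small caveat is that your Taylor step $h(f(\e x))=h(0)+O(\e|x|)$ assumes more regularity than the stated $h\in C^0$, whereas the paper just invokes continuity at this point and does not quantify that particular remainder.
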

\begin{proof}
We let
$$
E(u_\e):=\int_\O |\n u_\e|^2 dz+\int_{\Gamma_2} h(x) u_\e^2(x) dx.
$$
Integrating by parts, we have
$$
E\left(u_\e\right)
=\int_{\O \cap F\left(B_r\right)} |\n v_\e|^2 dz
+\int_{\G_2 \cap F\left(B_r\right)} h v_\e^2 dx+F_\e
$$
where 
$$
F_\e=\int_{\O \cap F\left(B_r\right)} \left(\eta^2-1\right)|\n v_\e|^2 dz
+\int_{\G_2 \cap F\left(B_r\right)} h\left(\eta^2-1\right) v_\e^2 dx
-\int_{\O \cap F\left(B_r\right)} \left(\eta \D \eta\right) v_\e^2 dz.
$$
By a change of variable formula, we have
{\small $$
\int_{\O \cap F\left(B_r\right)} |\n v_\e|^2 dz=\sum_{\a\b=1}^{N+1}\int_{B^+_{r/\e}} g^{\a\b}\left(\e z\right) \left(\frac{\de w}{\de z_\a}\cdot \frac{\de w}{\de z_\b}\right)(z) \sqrt{|g|}(\e z) dz.
$$}
We deduce from Lemma \ref{MaMetric} that for $i,j=2,\cdots, N+1$ that
$$
g^{11}=1;\qquad\qquad  
g^{ij}(\e z)=\d_{ij}-2\e H_{ij} z_1+O\left(\e^2 |x|^2\right)\qquad\qquad \textrm{and} \qquad\qquad
g^{i1}=0.
$$
and 
{\small
\be\label{Leral0}
\sqrt{|g|}(\e z)=1+\e H_{\de \O}(0) z_1+O \left(\e^2 |x|^2\right).
\ee
}
Therefore
{\small
\begin{equation*}
\int_{\O \cap F\left(B_r\right)} |\n v_\e|^2 dz=\int_{B^+_{r/\e}} |\n w|^2 \sqrt{|g|}(\e z) dz
-\e\frac{2}{N} H_{\de \O}(0) \int_{B^+_{r/\e}}z_1 |\n_x w|^2 \sqrt{|g|}(\e z) dz+O\left(\e^2 \int_{B^+_{r/\e}} |x|^2 |\n_x w|^2 dz\right).
\end{equation*}
}
Hence
\begin{equation}\label{Leral1}
\begin{array}{ll}
\displaystyle\int_{\O \cap F\left(B_r\right)} |\n v_\e|^2 dz=&\displaystyle \int_{B^+_{r/\e}} |\n w|^2 dz+\e H_{\de \O}(0) \int_{B^+_{r/\e}} z_1 |\n w|^2dz\\\\
&\displaystyle-\frac{2}{N}\e H_{\de \O}(0) \int_{B^+_{r/\e}} z_1 |\n_x w|^2dz+O\left(\e^2 \int_{B^+_{r/\e}} |x|^2 |\n_x w|^2 dz\right).
\end{array}
\end{equation}
By change of variable formula, continuity and \eqref{Leral0}, we get that
{\small \be\label{Leral2}
\begin{array}{ll}
\displaystyle\int_{\G_2\cap F(B_r)} h v_\e^2 dx&\displaystyle= h(0) \e\int_{\de B^+_{r/\e}} w^2(x) \sqrt{|g|}(0, \e x) dx=h(0)\e \int_{\de B^+_{r/\e}} w^2(x)dx+O\left(\e^3 \int_{\de B^+_{r/\e}}|x|^2 w^2(x)dx\right).
\end{array}
\ee
}
By a change of variable formula, we have
{\small \be\label{Leral3}
F_\e=O\left(\int_{B^+_{r/\e}\setminus B^+_{r/2\e}}|\n w|^2 dz+\e \int_{\de B^+_{r/\e}\setminus \de B^+_{r/2\e}} w^2(x) dx+\e^2 \int_{B^+_{r/\e}\setminus B^+_{r/2\e}} w^2(z) dz\right)
\ee
}
By \eqref{Leral1}, \eqref{Leral2} and \eqref{Leral3} we obtain that
{\small 
\begin{equation}\label{Energy}
\begin{array}{ll}
\displaystyle E\left(u_\e\right)=\int_{B^+_{r/\e}} |\n w|^2 dz+\e H_{\de \O}(0) \int_{B^+_{r/\e}} z_1 |\n w|^2dz&\displaystyle -\frac{2}{N}\e H_{\de \O}(0) \int_{B^+_{r/\e}} z_1 |\n_x w|^2dz\\\\
&\displaystyle h(0)\e \int_{\de B^+_{r/\e}} w^2(x)dx
+O\left(\rho_1(\e)\right)
\end{array}
\end{equation}
}
where
{\small $$
\begin{array}{ll}
\displaystyle\rho_1(\e)=\e^2 \int_{B^+_{r/\e}} |x|^2 |\n_x w|^2 dz&\displaystyle+\e^3 \int_{\de B^+_{r/\e}}|x|^2 w^2(x)dx
+\int_{B^+_{r/\e}\setminus B^+_{r/2\e}}|\n w|^2 dz\\\\
&\displaystyle+\e \int_{\de B^+_{r/\e}\setminus \de B^+_{r/2\e}} w^2(x) dx+\e^2 \int_{B^+_{r/\e}\setminus B^+_{r/2\e}} w^2(z) dz.
\end{array}
$$}
We have
$$
\int_{\G_2} d(x)^{-s} |u_\e|^{q(s)} dx= \int_{\G_2 \cap F(B_r)} d(x)^{-s} |v_\e|^{q(s)} dx+O\left(\int_{\G_2 \cap F(B_r)\setminus F(B_{r/2})} d(x)^{-s} |v_\e|^{q(s)} dx\right).
$$
By a change of variable formula and \eqref{Leral0} we have
{\small
$$
\begin{array}{ll}
\displaystyle \int_{\G_2} d(x)^{-s} |u_\e|^{q(s)} dx&\displaystyle=\int_{\de B^+_{r/\e}} |x|^{-s} w^{q(s)} \sqrt{|g|}(0, \e x) dx\\\\
&\displaystyle=
\int_{\de B^+_{r/\e}} |x|^{-s} w^{q(s)}dx+O\left(\e^2 \int_{\de B^+_{r/\e}} |x|^{2-s} w^{q(s)} dx\right)\\\\
&\displaystyle
=\int_{\de \R^{N+1}_+} |x|^{-s} w^{q(s)}dx+O\left(\rho_2\left(\e\right)\right)=1+O\left(\rho_2\left(\e\right)\right),
\end{array}
$$
}
where
$$
\rho_2(\e)=\int_{\de \R^{N+1}_+\setminus \de B^+_{r/\e}} |x|^{-s} w^{q(s)}dx+\e^2 \int_{\de B^+_{r/\e}} |x|^{2-s} w^{q(s)} dx.
$$
Therefore by Taylor expansion, we get
\begin{equation}\label{Denominateur}
\left(\int_{\G_2} d(x)^{-s} |u_\e|^{q(s)} dx\right)^{2/q(s)}=1+O\left(\rho_2(\e)\right).
\end{equation}
Using \eqref{Energy} and \eqref{Denominateur}, we obatin that
\begin{equation}\label{FinaleFinaleFinale}
\begin{array}{ll}
\displaystyle J\left(u_\e\right)=\int_{\R^{N+1}_+} |\n w|^2 dz+\e H_{\de \O}(0) \int_{B^+_{r/\e}} z_1 |\n w|^2dz&\displaystyle -\frac{2}{N}\e H_{\de \O}(0) \int_{B^+_{r/\e}} z_1 |\n_x w|^2dz\\\\
&\displaystyle h(0)\e \int_{\de B^+_{r/\e}} w^2(x)dx
+O\left(\rho(\e)\right)
\end{array}
\end{equation}
where
$$
\rho(\e)=\rho_1(\e)+\rho_2(\e)+\int_{\R^{N+1}_+\setminus B^+_{r/\e}} |\n w|^2 dz.
$$
This ends the proof.
\end{proof}
We will compute the error term in the following. Then we have
\begin{lemma}\label{LemmaError}
Let $\rho(\e)$ to be the error term given by Proposition \ref{Prop1}. Then we have
$$
\rho(\e)=o(\e) \qquad \forall N \geq 3
$$
and in particular  by Proposition \ref{Prop1}, we have for all $N\geq 3$ that
$$
\begin{array}{ll}
J\left(u_\e\right)=
\displaystyle S_{N,s}+\e H_{\de \O}(0) \int_{B^+_{r/\e}} z_1 |\n w|^2dz -\frac{2}{N}\e H_{\de \O}(0) \int_{B^+_{r/\e}} z_1 |\n_x w|^2dz+ h(0)\e \int_{\de B^+_{r/\e}} w^2(x)dx
+o\left(\e\right).
\end{array}
$$
\end{lemma}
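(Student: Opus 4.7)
The plan is to bound each of the seven summands appearing in the expression \eqref{Errorrr} for $\rho(\e)$ separately, and to show that every one of them is $o(\e)$ as $\e\to 0^+$. The two key inputs are the pointwise decay $w(z)\leq C(1+|z|)^{1-N}$ from Lemma \ref{Lemma111}(ii), together with the companion gradient estimate $|\n w(z)|\leq C(1+|z|)^{-N}$. The latter follows from the Kelvin invariance of the system \eqref{***} combined with standard interior and boundary elliptic regularity applied to $w$ away from the origin: precisely, the Kelvin transform $\tilde{w}(z)=|z|^{1-N}w(z/|z|^2)$ solves the same system, so decay at infinity is traded for regularity at the origin and the gradient estimate follows.

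Passing to spherical coordinates reduces each piece to a one-dimensional radial integral. The first term becomes $\e^2\int_1^{r/\e}\rho^{2-N}\,d\rho$, which is $O(\e^2\log(1/\e))$ when $N=3$ and $O(\e^2)$ for $N\geq 4$. The second term becomes $\e^3\int_1^{r/\e}\rho^{3-N}\,d\rho$, giving $O(\e^2)$ at $N=3$, $O(\e^3\log(1/\e))$ at $N=4$, and $O(\e^3)$ for $N\geq 5$. The third and fourth terms are integrals over thin spherical annuli whose inner and outer radii are of order $1/\e$; using $w(z)^2\leq C|z|^{2(1-N)}$ they are both $O(\e^N)$. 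For the fifth and sixth terms one uses the identity $(1-N)q(s)=-2(N-s)$, which makes the radial exponents equal to $s-N-1$ and $s-N+1$ respectively, producing $O(\e^{N-s})$ and $O(\e^{N-s})$ (for $N\geq 3$ and $s\in[0,1)$ both integrals in $\rho$ are convergent at infinity, so the smallness comes entirely from the prefactor). Finally the seventh term is bounded by $\int_{r/(2\e)}^{\infty}\rho^{-N}\,d\rho=O(\e^{N-1})$.

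The main obstacle is the borderline behaviour at $N=3$ (and to a lesser extent $N=4$), where several of the radial integrals fail to converge and produce a logarithmic factor $\log(1/\e)$. In each such case one must verify that the prefactor carries an extra power of $\e$ sufficient to absorb the log and still beat $\e$; this is what ultimately forces the hypothesis $N\geq 3$. Summing the seven bounds above then yields $\rho(\e)=o(\e)$ for every $N\geq 3$. Once this is proved, the claimed expansion of $J(u_\e)$ is obtained by inserting the bound $\rho(\e)=o(\e)$ into the identity \eqref{FinaleFinaleFinale1} of Lemma \ref{ExpansionJ} and invoking the normalization $S_{N,s}=\int_{\R^{N+1}_+}|\n w|^2\,dz$ from \eqref{Normalized} to identify the constant term.
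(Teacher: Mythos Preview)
Your argument is correct in outline and reaches the right conclusion, but it differs from the paper's proof in one essential respect.

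The paper never establishes or uses a pointwise gradient bound $|\n w(z)|\le C(1+|z|)^{-N}$. Instead, the two terms in $\rho(\e)$ that contain $|\n w|^2$ (namely $\e^2\int_{B^+_{r/\e}}|x|^2|\n_x w|^2\,dz$ and $\int_{\R^{N+1}_+\setminus B^+_{r/2\e}}|\n w|^2\,dz$) are handled by multiplying the equation $\Delta w=0$ by $\psi_\e\,|x|^2\,w$ and by $\varphi_\e\,w$ respectively and integrating by parts; this converts each gradient integral into a combination of integrals of $w^2$ and $|x|^{-s}w^{q(s)}$, which are then controlled using only the decay $w(z)\le C(1+|z|)^{1-N}$ of Lemma~\ref{Lemma111}(ii). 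Your route is more direct once the gradient estimate is in hand, but it requires that extra input; the paper's route is entirely self-contained, using nothing beyond Lemma~\ref{Lemma111}.

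Two small points on your write-up. First, your Kelvin-transform sketch for the gradient decay is plausible but thin: the singular weight $|x|^{-s}$ in the Neumann condition makes $C^1$ regularity of the transformed function at the origin non-obvious, and a cleaner justification is to rescale on dyadic annuli $R\le |z|\le 2R$ and apply standard boundary Schauder (or $W^{2,p}$) estimates there. Second, a few of the stated orders are slightly off: the annular terms (your ``third'' and ``fourth'') are $O(\e^{N-1})$ and $O(\e^{2})$ respectively rather than $O(\e^{N})$, and your ``sixth'' term is $O(\e^{2})$ rather than $O(\e^{N-s})$. None of this affects the conclusion $\rho(\e)=o(\e)$ for $N\ge 3$.
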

\begin{proof}
We recall that the ground state solution $w$ satisfies 
\begin{equation}\label{Estim123}
w(z) \leq \frac{C}{1+|z|^{N-1}} \qquad \textrm{in } \R^{N+1}_+.
\end{equation}
Then letting
$$
\begin{array}{ll}
\displaystyle S_1(\e):=&\displaystyle  \e^3 \int_{\de B^+_{r/\e}}|x|^2 w^2(x)dx
+\e \int_{\de B^+_{r/\e}\setminus \de B^+_{r/2\e}} w^2(x) dx+\e^2 \int_{B^+_{r/\e}\setminus B^+_{r/2\e}} w^2(z) dz\\\\
&\displaystyle+\int_{\de \R^{N+1}_+\setminus \de B^+_{r/\e}} |x|^{-s} w^{q(s)}dx+\e^2 \int_{\de B^+_{r/\e}} |x|^{2-s} w^{q(s)} dx,
\end{array}
$$
we get by a change of variable formula that
\begin{equation}\label{S1}
S_{1}(\e)=o\left(\e\right) \qquad \textrm{for } N\geq 3.
\end{equation}
We let $\vp \in \calC^{\infty}_c\left(\R^{N+1}_+\setminus B^+_{r/2}\right)$ and we set $\vp_\e(z)=\vp\left(\e z\right)$. We multiply \eqref{****} by $\vp_\e$ and we integrate by parts to get
$$
\int_{\R^{N+1}_+ \setminus B^+_{r/2\e}} \vp |\n w|^2 dz
=
\frac{1}{2} \e^2 \int_{\R^{N+1}_+\setminus B^+_{r/2\e}} w^2 \D \vp_\e dz+\int_{\de \R^{N+1}_+\setminus \de B^+_{r/2\e}} \vp_\e |x|^{-s} w^{q(s)} dx.
$$
Then using the estimation \eqref{Estim123}, we obtain
\begin{equation*}\label{Er2}
S_2(\e):=\int_{\R^{N+1}_+\setminus B^+_{r/2\e}} |\n w|^2 dz
=
O\left(\e^2\int_{\R^{N+1}_+\setminus B^+_{r/2\e}} w^2 dz+\int_{\de \R^{N+1}_+\setminus \de B^+_{r/2\e}} |x|^{-s} w^{q(s)} dx\right).
\end{equation*}
Then we get
\begin{equation}\label{S2}
\int_{\R^{N+1}_+\setminus B^+_{r/2\e}} |\n w|^2 dz=O\left(\e\right) \qquad \textrm{for } N \geq 3.
\end{equation}
To finish the estimation of the error term, we let $\psi \in \calC^\infty_c\left(B^+_r\right)$ and we define $\psi_\e(z)=\psi(\e z)$. We then multiply \eqref{****} by $\psi_\e w |x|^2$ and we integrate by parts to get
$$
\int_{B^+_{r/\e}} |x|^2 \psi_\e |\n w|^2 dz=\frac{1}{2} \int_{B^+_{r/\e}} w^2 \Delta\left(\psi_\e |x|^2\right) dz+\int_{\de B^+_{r/\e}} \psi_\e w^{2^*(s)} |x|^{2-s} dx.
$$
This implies
\begin{equation*}
S_3(\e):=\e^2 \int_{B^+_{r/\e}} |x|^2 |\n w|^2 dz=O\left(\e^2\int_{B^+_{r/\e}} w^2 dz+\e^2\int_{\de B^+_{r/\e}} w^{2^*(s)} |x|^{2-s} dx\right).
\end{equation*}
Therefore
\begin{equation}\label{S3}
S_3(\e) =o\left(\e\right) \qquad \textrm{ for }\quad N \geq 3.
\end{equation}
By \eqref{S1}, \eqref{S2} and \eqref{S3}, we finally obtain
$$
S_1(\e)+S_2(\e)+S_3(\e)=\rho(\e)=o\left(\e\right) \qquad \textrm{ for }\quad N \geq 3.
$$
This ends the proof of the Lemma.
\end{proof}

\begin{proposition}\label{Prop1}
We assume that
\begin{equation}\label{Exterieur}
\left(\frac{N-2}{2 N}+\frac{1}{N}\frac{\displaystyle\int_{\R^{N+1}_+} z_1 \left|\frac{\de w}{\de z_1}\right|^2 dz}{\displaystyle \int_{\R^{N+1}_+} z_1 |\n w|^2 dz}\right)H_{\de \O} (0)+h(0)<0.
\end{equation}
Then $\mu_s(\O)<S_{N,s}$.
\end{proposition}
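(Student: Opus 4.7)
The plan is to plug the cut-off test function $u_\e$ from Section \ref{Sect2} into $J$, invoke Lemma \ref{LemmaError}, and show that under \eqref{Exterieur} the coefficient of $\e$ in the expansion of $J(u_\e)$ is strictly negative. Using the decay $w(z)\leq C(1+|z|)^{-(N-1)}$ from Lemma \ref{Lemma111}(ii), together with standard elliptic estimates giving a matching bound on $|\n w|$, the truncated integrals $\int_{B^+_{r/\e}}$ and $\int_{\de B^+_{r/\e}}$ appearing in Lemma \ref{LemmaError} all converge as $\e\to 0$ to integrals over $\R^{N+1}_+$ and $\de\R^{N+1}_+$, and these limits are finite precisely because $N\geq 3$. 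Since $\mu_s(\O)\leq J(u_\e)$ for every admissible $\e>0$, proving a strict sign on that linear coefficient will yield the result.

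Writing $|\n_x w|^2=|\n w|^2-\left|\frac{\de w}{\de z_1}\right|^2$ regroups the two curvature terms of Lemma \ref{LemmaError} as
$$
\frac{N-2}{N}H_{\de\O}(0)\int_{\R^{N+1}_+}z_1|\n w|^2\,dz+\frac{2}{N}H_{\de\O}(0)\int_{\R^{N+1}_+}z_1\left|\frac{\de w}{\de z_1}\right|^2\,dz,
$$
so, up to the potential contribution $h(0)\int_{\de\R^{N+1}_+}w^2\,dx$, the coefficient of $\e$ in $J(u_\e)$ already features the same quantities appearing in \eqref{Exterieur}.

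The crucial remaining step is the Pohozaev-type identity
$$
\int_{\de\R^{N+1}_+}w^2\,dx=2\int_{\R^{N+1}_+}z_1|\n w|^2\,dz,
$$
which I would obtain by applying Green's second identity to the pair $(z_1,w^2)$ on $B^+_R$ and letting $R\to\infty$. Indeed $\D z_1=0$, and the harmonicity of $w$ from Lemma \ref{Lemma111} gives $\D(w^2)=2|\n w|^2$; on $\{z_1=0\}$ the outer unit normal is $\nu=-E_1$, so $\de_\nu z_1=-1$ while $z_1$ itself vanishes, producing the two sides above. The boundary contributions on the spherical cap $\{|z|=R\}\cap\R^{N+1}_+$ are $O(R^{-N+2})$ by the decay bounds and therefore vanish as $R\to\infty$ whenever $N\geq 3$. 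Justifying this cut-off limit rigorously is the main technical point of the argument.

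Inserting the Pohozaev identity into the previous expression, the coefficient of $\e$ in $J(u_\e)$ becomes
$$
2\bigg(\int_{\R^{N+1}_+}z_1|\n w|^2\,dz\bigg)\!\left[\bigg(\frac{N-2}{2N}+\frac{1}{N}\frac{\int_{\R^{N+1}_+}z_1\left|\frac{\de w}{\de z_1}\right|^2\,dz}{\int_{\R^{N+1}_+}z_1|\n w|^2\,dz}\bigg)H_{\de\O}(0)+h(0)\right],
$$
which is strictly negative by hypothesis \eqref{Exterieur}. Consequently $J(u_\e)=S_{N,s}+\e\cdot(\text{negative})+o(\e)<S_{N,s}$ for all sufficiently small $\e>0$, and the variational characterization of $\mu_s(\O)$ yields $\mu_s(\O)\leq J(u_\e)<S_{N,s}$.
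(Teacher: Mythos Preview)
Your proposal is correct and follows essentially the same route as the paper: invoke Lemma~\ref{LemmaError}, regroup $|\n_x w|^2=|\n w|^2-|\partial_{z_1}w|^2$, and use the identity $\int_{\partial\R^{N+1}_+}w^2\,dx=2\int_{\R^{N+1}_+}z_1|\n w|^2\,dz$ to rewrite the potential term, so that the coefficient of $\e$ becomes exactly the bracket in~\eqref{Exterieur}. The only cosmetic difference is that the paper derives this identity by multiplying $\Delta w=0$ by $z_1 w$ and integrating in $z_1$ (and handles the tails via cut-offs $\phi_\e$ rather than gradient bounds), whereas you phrase it as Green's second identity on $B^+_R$ for the pair $(z_1,w^2)$ together with the decay $|\n w(z)|\lesssim(1+|z|)^{-N}$; both are equivalent once $N\ge 3$.
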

\begin{proof}
We recall that the ground state solution $w$ satisfies 
\begin{align}\label{****}
\begin{cases}
\displaystyle \D w =0&\qquad \textrm{ in } \R^{N+1}_+, \vspace{3mm}\\
\displaystyle \frac{\de w}{\de \nu}=S_{N,s} w^{q(s)-1}|x|^{-s} &\qquad \textrm{ on } \R^N, \vspace{3mm}\\
\displaystyle\int_{\R^{N}} |x|^{-s} w^{q(s)}\,dx=1.&
\end{cases}
\end{align}
We multiply \eqref{****} by $z_1 \phi_\e w$ and we integrate by parts to get
$$
\int_{\R^{N+1}_+\setminus B^+_{r/\e}} z_1  \phi |\n w|^2 dz=\frac{1}{2}\int_{\R^{N+1}_+\setminus B^+_{r/\e}} w^2 \D \left(\phi_\e z_1\right) dz.
$$
This implies that for all $N \geq 3$
\begin{equation}\label{Er3}
\int_{\R^{N+1}_+\setminus B^+_{r/\e}} z_1 |\n w|^2 dz+\int_{\de\R^{N+1}_+\setminus \de B^+_{r/\e}} z_1 |\n w|^2 dz=O\left(\e^2\int_{\R^{N+1}_+\setminus B^+_{r/\e}} w^2  dz+\int_{\de \R^{N+1}_+\setminus \de B^+_{r/\e}} w^2  dz\right)=o\left(\e\right).
\end{equation}
Moreover multiplying again \eqref{****} by $z_1 w$ and integrating by parts, we get
$$
\int_{\R^{N+1}} z_1 |\n w|^2 dz=-\frac{1}{2}\int_{ \R^{N+1}_+} \frac{\de w^2}{\de z_1} dz=-\frac{1}{2}\int_{\R^{N}} \int_0^{+\infty} \frac{\de w^2}{\de z_1} dz_1 dx
$$
where we can see that
$$
\int_0^{+\infty} \frac{\de w^2}{\de z_1} dz_1=\lim_{R \to +\infty}\int_0^R \frac{\de w^2}{\de z_1} dz_1
=\lim_{R \to +\infty} w^2(R,x)-w(0,x).
$$
Since 
$$
w(z) \leq \frac{C}{1+|z|^{N-1}}
$$
we obtain
$$
\int_0^{+\infty} \frac{\de w^2}{\de z_1} dz_1=-w(0,x).
$$
Therefore 
\begin{equation}\label{Walabok}
\int_{\R^{N+1}_+} z_1 |\n w|^2 dz=\frac{1}{2} \int_{\de \R^{N+1}_+} w^2 dx<+\infty \qquad \forall N \geq 3.
\end{equation}
Hence by Lemma \ref{LemmaError}, \eqref{Er3} and \eqref{Walabok}, we finally obtain for all $N\geq 3$ that
\begin{equation}\label{Oup1}
\begin{array}{ll}
\displaystyle J(u_\e)=S_{n,s}&\displaystyle+\e \left(\frac{N-2}{N} H_{\de \O}(0)+2h(0)\right) \int_{\R^{N+1}_+} z_1 |\n w|^2 dz+\frac{2}{N} \e H_{\de \O}(0) \int_{\R^{N+1}_+} z_1 \left|\frac{\de w}{\de z_1}\right|^2 dz+o\left(\e\right).
\end{array}
\end{equation}
Since
$$
\mu_s\left(\O\right)\leq J\left(u_\e\right).
$$
we have $\mu<S_{N,s}$ provided that
$$
\left(\frac{N-2}{N} H_{\de \O}(0)+2h(0)\right) \int_{\R^{N+1}_+} z_1 |\n w|^2 dz+\frac{2}{N} H_{\de \O}(0) \int_{\R^{N+1}_+} z_1 \left|\frac{\de w}{\de z_1}\right|^2 dz<0.
$$
That is
$$
\left(\frac{N-2}{2 N}+\frac{1}{N}\frac{\displaystyle\int_{\R^{N+1}_+} z_1 \left|\frac{\de w}{\de z_1}\right|^2 dz}{\displaystyle \int_{\R^{N+1}_+} z_1 |\n w|^2 dz}\right)H_{\de \O} (0)+h(0)<0
$$
that ends the proof.
\end{proof}
\section{Existence of minimizer for $\mu_s\left(\O\right)$}\label{Sect3}
It is clear from Proposition \ref{Prop1} that the proof of Theorem \ref{Theorem1.1} should be finalized by the following two results in this section. Then we have
\begin{proposition}\label{prop:exist_smO}
Let $\O$ be a smooth bounded domain of $\R^{N+1}$ of boundary $\de \O= \Gamma_1 \cup \Gamma_2$ and such that $0\in \de \O \cap \Gamma_2$, $h\in \calC^0\left(\de \O \cap \Gamma_2\right)$ and $s \in (0,2)$. Assume that 
$\mu_s(\O)< S_{N,s}$. Then there exists a minimizer for $\mu_s(\O).$
\end{proposition}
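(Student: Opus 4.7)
The plan is to run the standard concentration--compactness argument adapted to the Hardy--Sobolev trace functional. I would take a minimizing sequence $(u_n)\subset H^1(\O)$ for $\mu_s(\O)$ with the normalization $\int_{\G_2} d(x)^{-s}|u_n|^{q(s)}\,dx=1$, so that $E(u_n):=\int_\O|\n u_n|^2\,dz+\int_{\G_2}h u_n^2\,dx\to\mu_s(\O)$. The coercivity hypothesis \eqref{Coercivity} makes the sequence bounded in $H^1(\O)$; along a subsequence $u_n\weakto u$ in $H^1(\O)$, $u_n\to u$ in $L^2(\O)$ and in $L^2(\G_2)$ by the compact Sobolev and trace embeddings, and $u_n\to u$ almost everywhere. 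Setting $v_n:=u_n-u$, I apply the Brezis--Lieb lemma to the critical weighted trace integral and use weak $L^2$-orthogonality for the gradient, while $\int_{\G_2} h u_n^2\to\int_{\G_2} h u^2$ follows from strong $L^2(\G_2)$-convergence. Writing $a:=\int_{\G_2}d(x)^{-s}|u|^{q(s)}\,dx$ and $b:=\lim_n\int_{\G_2}d(x)^{-s}|v_n|^{q(s)}\,dx$, one obtains $a+b=1$ together with
\begin{equation*}
\mu_s(\O)=E(u)+\lim_{n\to\infty}\int_\O|\n v_n|^2\,dz.
\end{equation*}

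The heart of the argument, and the step I expect to be the main obstacle, is the localized lower bound
\begin{equation*}
\liminf_{n\to\infty}\int_\O|\n v_n|^2\,dz\;\geq\; S_{N,s}\,b^{2/q(s)}.
\end{equation*}
Since $q(s)<2^{\sharp}$ for $s>0$, the trace embedding into $L^{q(s)}(\G_2\setminus B_\delta(0))$ is compact for every $\delta>0$; hence $v_n\to 0$ in $L^{q(s)}$ away from the origin, and all of $b$ must concentrate at $0$. To recover the sharp constant I would fix a radial cutoff $\eta_\delta$ with $\supp\eta_\delta\subset F(B_{2\delta})$ and $\eta_\delta\equiv 1$ on $F(B_\delta)$, apply the definition of $S_{N,s}$ to $\eta_\delta v_n$ after pulling the integrals back to $B^+_{2\delta}$ via the parametrization $F$ of Section \ref{Sect1}, and use Lemma \ref{MaMetric} to absorb the metric corrections into an $O(\delta)$-error in both numerator and denominator of the Rayleigh quotient. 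Cross terms of the form $\int\n\eta_\delta\cdot v_n\n v_n$ are controlled by $v_n\weakto 0$ and the uniform $H^1$ bound on $(v_n)$. Sending $n\to\infty$ first and then $\delta\to 0$ delivers the bound.

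The final step is algebraic. The definition of $\mu_s(\O)$ applied to $u$ gives $E(u)\geq\mu_s(\O)\,a^{2/q(s)}$ (trivially if $u\equiv 0$, in which case $a=0=E(u)$), so
\begin{equation*}
\mu_s(\O)\;\geq\;\mu_s(\O)\,a^{2/q(s)}+S_{N,s}\,b^{2/q(s)}.
\end{equation*}
Because $q(s)>2$, the map $t\mapsto t^{2/q(s)}$ is concave with value $0$ at $0$ and hence subadditive, so $a^{2/q(s)}+b^{2/q(s)}\geq (a+b)^{2/q(s)}=1$, and the previous inequality reduces to
\begin{equation*}
0\;\geq\;\bigl(S_{N,s}-\mu_s(\O)\bigr)\,b^{2/q(s)}.
\end{equation*}
The strict gap $\mu_s(\O)<S_{N,s}$ therefore forces $b=0$, hence $a=1$, $u\not\equiv 0$, and $\int_\O|\n v_n|^2\,dz\to 0$; in particular $u_n\to u$ strongly in $H^1(\O)$ and $u$ realizes $\mu_s(\O)$.
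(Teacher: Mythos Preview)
Your outline is correct and will work, but it follows a genuinely different path from the paper's own proof. You run a Brezis--Lieb splitting $u_n=u+v_n$ and prove the localized sharp lower bound $\liminf_n\int_\O|\n v_n|^2\geq S_{N,s}\,b^{2/q(s)}$ by flattening the boundary near $0$ via the chart $F$ and absorbing the metric error into an $O(\delta)$ factor; the contradiction then comes from the concavity/subadditivity of $t\mapsto t^{2/q(s)}$. This is exactly the scheme the paper uses for $s=0$ (Proposition \ref{prop:exist_smOs0}), transplanted to the singular case by exploiting the subcritical compactness $q(s)<2^\sharp$ away from the origin.

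For $s\in(0,1)$ the paper instead upgrades the minimizing sequence via Ekeland to a Palais--Smale sequence, assumes $u=0$, extracts a concentration scale $r_n\to 0$, rescales $w_n(z)=r_n^{(N-1)/2}u_n(F(r_nz))$, and passes to a nontrivial weak limit $w\in\calD$ solving the half-space Euler--Lagrange equation; testing that equation with $w$ gives $\mu_s(\O)\geq S_{N,s}$, a contradiction. Your route is more elementary: it needs no Euler--Lagrange information and no blow-up, only the definition of $S_{N,s}$, Brezis--Lieb, and the metric expansion of Lemma \ref{MaMetric}. The paper's route, on the other hand, yields more---it actually identifies the limiting profile $w$ as an extremal for $S_{N,s}$---at the price of invoking Ekeland and a rescaling analysis. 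One small point to make explicit in your write-up: the last algebraic step uses $\mu_s(\O)>0$, which follows from the coercivity assumption \eqref{Coercivity}; and the cross terms involving $\n\eta_\delta$ vanish because $v_n\to 0$ strongly in $L^2(\O)$ (not merely weakly), which you already recorded.
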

\begin{proof}
We define $\Phi,\Psi: H^1(\O)\to \R$ by
$$
\Phi(u):=\frac{1}{2}\left(  \int_{\O}|\n u|^2dz 
+\int_{\Gamma_2} h(x) u^2dx \right)
$$
and
$$
\Psi(u)=\frac{1}{q(s)}\int_{\Gamma_2}d^{-s}(\s) |u|^{q(s)}d x.
$$
By Ekland variational principle there exits a minimizing sequence  $u_n$  for the quotient $\mu:=\mu_s\left(\O\right)$ such that
\be\label{eq:uenorm}
\int_{\Gamma_2}d^{-s}(x) |u_n|^{q(s)}dx=1,
\ee
\be
\Phi(u_n)\to \frac{1}{2} \mu_s(\O)
\ee
and
\be\label{eq:ueps-stf}
\Phi'(u_n)-\mu_s(\O)\Psi'(u_n)\to 0\quad\textrm{ in } (H^{1}(\O))',
\ee
with $(H^{1}(\O))' $ denotes the dual of  $H^{1}(\O)$.
We have that
\be\label{eq:uebndH1}
 \int_{\O}|\n u_n|^2dx  +\int_{\Gamma_2} h(x) {u_n}^2d x \leq Const.\quad \forall n\geq 1.
\ee
In particular, by coercivity,  ${u_n} \rightharpoonup u$ for some $u$ in $ H^1(\O)$.\\
\textbf{Claim:}  $u\neq0$.\\
\noindent
Assume by contradiction that $u=0$ (that is blow up occur). By continuity, \eqref{eq:uenorm} and the fact that $s\in(0,1]$, there exits a sequence $r_{n}>0$ such that
\be\label{eq:concentr}
\int_{\Gamma_2\cap B_{{r_n}}}d^{-s}(\s) |{u_n}|^{q(s)}d\s=\frac{1}{2}.
\ee
We now show that, up to a subsequence, $r_n\to0$. Indeed, by  \eqref{eq:uenorm} and \eqref{eq:concentr}
$$
\int_{\Gamma_2\setminus B_{r_n}}d^{-s}(\s) |{u_n}|^{q(s)}d\s=\frac{1}{2}.
$$
 Since $q(s)<q(0)=2^\sharp$ for $s>0$, by compactness we have
$$
  r_n^s\,C\,\leq \int_{\Gamma_2 \setminus B_{r_n}} |{u_n}|^{q(s)}d\s\leq \int_{\Gamma_2} |{u_n}|^{q(s)}d\s\to0\quad\textrm{ as } n\to \infty,
$$
for some positive constant $C$.\\
Define $ F_n(z)=\frac{1}{r_n}F(r_n z)$ for every $z\in B^+_{\frac{r_0}{r_n}} $ and put  $(g_n)_{i,j}:=\la \de_i F_n, \de_j F_n \ra=g_{ij}(r_n z)$. Clearly 
\be\label{eq:gntogEuc}
g_n\to g_{Euc}\quad C^1(K)\quad \textrm{ for every compact set } K\subset\R^{N+1},
\ee
where $g_{Euc}$ denotes the Euclidean metric. 
Let
$$
{w_n}(z)=r_n^{\frac{N-1}{2}}{u_n}(F(r_n z))\quad \forall z\in B^+_{\frac{r_0}{r_n}}.
$$
Then we get
$$
\int_{B^N_{{r_0} } }|\ti{z}|^{-s} {w_n}^{q(s)} d\tz=(1+o(1)) \int_{B^N_{{r_0}} }|\ti{z}|^{-s} {w_n}^{q(s)} \sqrt{|g_n|} d\tz
.
$$
Hence by \eqref{eq:concentr} we have
\be\label{eq:wenolmrqe}
\int_{B^N_{{r_0} } }|\ti{z}|^{-s} {w_n}^{q(s)} d\tz=\frac{1}{2} (1+c r_n) .
\ee
Let $\eta\in C^\infty_c(F(B_{r_0}))$, $\eta\equiv 1$ on $ F(B_{\frac{r_0}{2}})$ and  $\eta\equiv 0$ on $\R^{N+1} \setminus F(B_{{r_0}})$.
We define
$$
\eta_n(z)= \eta(F(r_nz))\quad\forall z\in \R^{N+1}.
$$
We have that
\be\label{eq:nwHbd}
\|\eta_n w_n \|_{\calD}\leq C \quad\forall n\in\N,
\ee
where as usual $\calD=\calD^{1,2}(\ov{\R^{N+1}}) $. Therefore
$$
\eta_n w_n\rightharpoonup w \quad\textrm{ in } \calD.
$$
We first show that $w\neq 0$. Assume by contradiction that $w\equiv 0$. Thus  
$w_n\to 0 $ in $L^p_{loc}( \R^{N+1}_+ )$ and in $L^p_{loc}( \de \R^{N+1}_+ )$  for every $1\leq p<2^\sharp$.
Let $\vp\in C^\infty_c(B_{\frac{r_0}{2}})$ be a cut-off function such that $\vp\equiv1$ on $B_{\frac{r_0}{4}} $ and $\vp\leq 1$ in $\R^{N+1}$.
Define $$\vp_n(F(y))=\vp(r_n^{-1}y).$$
We multiply \eqref{eq:ueps-stf} by $\vp_n^2 {u_n}$ (which is bounded in $H^1(\O)$)
 and integrate by parts to get
\begin{equation}\label{K1}
\begin{array}{c}
\displaystyle \int_{ \O}\n u_n\n (\vp_n^2 {u_n}) dx +\int_{\Gamma_2} h(x) u_n^2 \vp_n^2 dx=
\displaystyle  \mu_s(\O) \int_{\Gamma_2}d^{-s}(\s) | \vp_n {u_n}|^{q(s)-2}(\vp_n u_n)^2d\s+ o(1)\\
\hspace{5cm}\displaystyle \leq \mu_s(\O) \left(\int_{\Gamma_2}d^{-s}(\s) | \vp_n {u_n}|^{q(s)}d\s\right)^{\frac{2}{q(s)}}+ o(1),
\end{array}
\end{equation}
where we have used \eqref{eq:uenorm}. By compactness, we can easy see that
\begin{equation}\label{K2}
\int_{\Gamma_2} h(x) u_n^2 \vp_n^2 dx=o(1).
\end{equation}
Then  in the  coordinate system and after integration by parts, \eqref{K1} becomes
$$
\begin{array}{c}
\displaystyle\int_{ \R^{N+1}_+}|\n(\vp {w_n})|^2_{g_n}\sqrt{|g_n|} dz  \leq \displaystyle \mu_s(\O)\left( \int_{ \de\R^{N+1}_+}|\ti{z}|^{-s} | \vp {w_n}|^{q(s)}\sqrt{|g_n|} d\tz\right)^{\frac{2}{q(s)}}+o(1).
\end{array}
$$
Therefore, by \eqref{eq:gntogEuc}, for some constant $c>0$, we have
\be\label{eq:almsCont}
\begin{array}{c}
\displaystyle(1-c r_n)\int_{ \R^{N+1}_+}|\n(\vp {w_n})|^2 dz \leq \displaystyle \mu_s(\O) \left( \int_{\de \R^{N+1}_+}|\ti{z}|^{-s} | \vp {w_n}|^{q(s)} d\tz\right)^{\frac{2}{q(s)}}+o(1).
\end{array}
\ee
Hence by the Hardy-Sobolev trace  inequality  \eqref{eq:CKNtrace}, we get 
\be\label{eq:SalmSe}
\displaystyle(1-c r_n)S_{N,s}\left(\int_{ \de\R^{N+1}_+}|\ti{z}|^{-s} | \vp {w_n}|^{q(s)}d\tz\right)^{\frac{2}{q(s)}}
 \displaystyle\leq
 \mu_s(\O)\left(\int_{ \de\R^{N+1}_+}|\ti{z}|^{-s} | \vp {w_n}|^{q(s)}
 d\tz\right)^{\frac{2}{q(s)}}
+o(1).
\ee
Since  $S_{N,s}> \mu $, we conclude that
$$
o(1)=\int_{\de \R^{N+1}_+}|\ti{z}|^{-s} | \vp {w_n}|^{q(s)}d\tz
=\int_{ B^N_{{r_0}}}|\ti{z}|^{-s} |  {w_n}|^{q(s)}d\tz+o(1)
$$
because by assumption $q(s)< 2^\sharp$. This is clearly in  contradiction with \eqref{eq:wenolmrqe} thus $w\neq0$.\\
Now pick $\phi\in C^\infty_c( \R^{N+1}\setminus \{0\} )$,
and put $ \phi_n(F(y))= \phi( r_n^{-1} y)$ for every $y\in B_{r_0}$. For $n$ sufficiently large,
 $ \phi_n\in  C^\infty_c(\ov{\O})$ and it  is bounded in $H^1(\O)$.
  We multiply \eqref{eq:ueps-stf} by $\phi_n$
 and integrate by parts to get
$$
\begin{array}{c}
\displaystyle\int_{ \O}\n u_n\n \phi_n dx =
\displaystyle  \mu_s(\O) \int_{\de \O}d^{-s}(\s) | {u_n}|^{q(s)-2}u_n \phi_n d\s+ o(1).
\end{array}
$$
Hence
$$
\begin{array}{c}
\displaystyle\int_{\R^{N+1}_+  }\la\n w_n,\n \phi\ra_{g_n} \sqrt{|g_n|}dz =
\displaystyle  \mu_s(\O) \int_{\de \R^{N+1}_+}|\tz|^{-s} | {w_n}|^{q(s)-2} w_n\phi\sqrt{|g_n|} d\tz+ o(1).
\end{array}
$$
Since $\eta_n\equiv 1$ on $B_{\frac{r_0}{2r_n}}$ and the support of $\phi$ is contained in an annulus, for $n$ sufficiently large
$$
\displaystyle\int_{\R^{N+1}_+  }\la\n (\eta_n w_n),\n \phi\ra_{g_n} \sqrt{|g_n|}dz=
\displaystyle  \mu_s(\O) \int_{\de \R^{N+1}_+}|\tz|^{-s} | {\eta_nw_n}|^{q(s)-2}\eta_n w_n \phi\sqrt{|g_n|} d\tz+ o(1).
$$
Since also $g_n$ converges smoothly to the Euclidean metric on the support of $\phi$,
 by passing to the limit, we infer that, for all $ \phi\in C^\infty_c( \R^{N+1}\setminus \{0\} )$
\be\label{eq: wstfAnn}
\displaystyle\int_{\R^{N+1}_+  }\n  w\n \phi\, dz 
=  \mu_s(\O) \int_{\de \R^{N+1}_+}|\tz|^{-s} | {w}|^{q(s)-2} w \phi\, d\tz.
\ee
Notice that $ C^\infty_c( \R^{N+1}\setminus \{0\}) $ is dense in $  C^\infty_c( \R^{N+1})$
with respect to the $H^{1}( \R^{N+1})$ norm when $N\geq 2$, see e.g. \cite{Maz}.
Consequently since $w\in \calD$, it follows  that  \eqref{eq: wstfAnn} 
holds for all $ \phi\in C^\infty_c( \R^{N+1})$ by \eqref{eq:CKNtrace}. We conclude that
\begin{align*}
 \begin{cases}
\D w=0 &\quad\textrm{ in }  \R^{N+1}_+, \vspace{3mm}\\
\displaystyle-\frac{\de w}{\de z^1}=S_{N,s} |\ti{z}|^{-s}|w|^{q(s)-2} w &\quad\textrm{ on }  \de \R^{N+1}_+,\vspace{3mm}
\\
\displaystyle\int_{ \de \R^{N+1}_+}|\ti{z}|^{-s}|w|^{q(s)}\leq 1,&  \vspace{3mm},  \\
w\neq 0.
  \end{cases}
\end{align*}
Multiplying this equation by $w$ and integrating by parts, leads to $\mu_s(\O)\geq S_{N,s}$ 
by \eqref{eq:CKNtrace} which is a contradiction and thus $u=\lim u_n\neq0$
is a minimizer for $\mu_s(\O)$.\\
\end{proof}
In the following we study the existence of minimizers for the Sobolev trace inequality.
\begin{proposition}\label{prop:exist_smOs0}
Let $\O$ be a smooth bounded domain of $\R^{N+1}$ of boundary $\de \O= \Gamma_1 \cup \Gamma_2$ and such that $0\in \de \O \cap \Gamma_2$ and $h\in \calC^0\left(\de \O \cap \Gamma_2\right)$. Assume that 
$\mu_0(\O)< S_{N,0}$. Then there exists a minimizer for $\mu_0(\O).$
\end{proposition}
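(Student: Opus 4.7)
The plan is to follow the blow-up/contradiction scheme of Proposition \ref{prop:exist_smO}, replacing the subcritical compactness argument (which relied on $q(s)<2^\sharp$) by a concentration-compactness argument, since $q(0)=2^\sharp$ is critical and the weight $d(x)^{-s}$ is no longer present to force concentration at the origin.

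By Ekeland's variational principle, I produce a Palais--Smale type minimizing sequence $(u_n)\subset H^1(\O)$ with $\int_{\Gamma_2}u_n^{2^\sharp}\,dx=1$, $\Phi(u_n)\to \mu_0(\O)/2$, and $\Phi'(u_n)-\mu_0(\O)\Psi'(u_n)\to 0$ in $(H^1(\O))'$. Coercivity \eqref{Coercivity} gives a uniform $H^1(\O)$ bound, so up to a subsequence $u_n\weak u$ in $H^1(\O)$ and $u_n\to u$ in $L^2(\Gamma_2)$ by compactness of the trace. The goal is to show $u\neq 0$; once that is done, standard Brezis--Lieb type arguments combined with the weak continuity of $\Phi'$ promote $u$ to a minimizer of $\mu_0(\O)$.

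Assume, for contradiction, that $u\equiv 0$. Apply Lions' concentration-compactness principle for boundary traces to obtain an at most countable index set $J$, points $p_j\in\overline{\Gamma_2}$ and non-negative weights $c_j,d_j$ with
\begin{equation*}
|u_n|^{2^\sharp}\,d\sigma\weak \sum_{j\in J} c_j\,\delta_{p_j},\qquad |\n u_n|^2\,dz\weak \rho\ge\sum_{j\in J} d_j\,\delta_{p_j},
\end{equation*}
$\sum_j c_j=1$, together with the sharp concentration bound $d_j\ge S_{N,0}\,c_j^{2/2^\sharp}$. Since $u=0$, the strong convergence in $L^2(\Gamma_2)$ yields $\int_{\Gamma_2}h u_n^2\,dx\to 0$, hence
\begin{equation*}
\mu_0(\O)=\lim_{n\to\infty}\Bigl(\int_\O|\n u_n|^2\,dz+\int_{\Gamma_2}h u_n^2\,dx\Bigr)\ge \sum_{j\in J} d_j\ge S_{N,0}\sum_{j\in J} c_j^{2/2^\sharp}.
\end{equation*}
Since $2/2^\sharp=(N-1)/N<1$ and $0<c_j\le 1$ with $\sum c_j=1$, we have $c_j^{2/2^\sharp}\ge c_j$, so $\sum_j c_j^{2/2^\sharp}\ge 1$. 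Thus $\mu_0(\O)\ge S_{N,0}$, contradicting the hypothesis and forcing $u\neq 0$.

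The main obstacle is the verification of the concentration inequality $d_j\ge S_{N,0}\,c_j^{2/2^\sharp}$ at an arbitrary boundary concentration point $p_j\in\overline{\Gamma_2}$, in particular when $p_j\in\Gamma_1\cap\Gamma_2$ where the decomposition $\de\O=\Gamma_1\cup\Gamma_2$ is singular. This is carried out by localizing with a smooth cut-off centered at $p_j$, passing to boundary normal coordinates $F_{p_j}$ (as in Lemma \ref{MaMetric}), and applying the half-space Sobolev trace inequality \eqref{eq:CKNtrace} with $s=0$ to the localized sequence, precisely in the spirit of the chain of inequalities \eqref{eq:almsCont}--\eqref{eq:SalmSe} in the proof of Proposition \ref{prop:exist_smO}.
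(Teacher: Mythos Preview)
Your approach is correct in spirit but takes a different and more laborious route than the paper's. You invoke Lions' concentration--compactness principle for boundary traces, localize at each concentration point $p_j$, flatten the boundary, and recover the sharp half-space inequality pointwise to obtain $d_j\ge S_{N,0}\,c_j^{2/2^\sharp}$. The paper bypasses all of this by using the \emph{global} sharp Sobolev trace inequality of Li--Zhu \cite{YanYanLi}:
\[
S_{N,0}\Bigl(\int_{\partial\Omega}|v|^{2^\sharp}\,d\sigma\Bigr)^{2/2^\sharp}\le \int_\Omega|\nabla v|^2\,dz+C\int_{\partial\Omega}|v|^2\,d\sigma,
\]
applied directly to $\theta_n:=u_n-u$. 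Combined with the Brezis--Lieb splitting on both numerator and denominator and the concavity of $t\mapsto t^{2/2^\sharp}$, this yields in a few lines that $\int_{\Gamma_2}|u|^{2^\sharp}\,d\sigma\ge 1$ whenever $\mu_0(\Omega)<S_{N,0}$. No localization, no measure decomposition, and in particular no case analysis at the interface $\Gamma_1\cap\Gamma_2$ is needed: the Li--Zhu inequality already packages the local flattening argument you sketch.

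Your concern about concentration points on $\overline{\Gamma_1}\cap\overline{\Gamma_2}$ is legitimate in your framework and is precisely the place where your proposal remains a sketch rather than a proof; the paper's global argument sidesteps this difficulty entirely. If you wish to keep your concentration--compactness route, you would have to argue carefully that either no mass of $|u_n|^{2^\sharp}\,d\sigma$ can concentrate on that interface (e.g.\ via capacity/trace considerations near the Dirichlet part $\Gamma_1$), or that the half-space model with the correct mixed boundary data still yields a constant $\ge S_{N,0}$ there. The Li--Zhu inequality makes this unnecessary.
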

\begin{proof}
Recall the Sobolev trace inequality, proved by Li and Zhu in  \cite{YanYanLi}: there  exists a positive constant $C=C(\O)$ such that for all 
$u\in H^1(\O)$, we have 
\be\label{eq:almstST}
\displaystyle   S_{N,0} \left(\int_{\Gamma_2}| u|^{2^{\sharp}}  d\s\right)^{2/2^{\sharp} }
 \leq S_{N,0} \left(\int_{\de\O}| u|^{2^{\sharp}}  d\s\right)^{2/2^{\sharp} }
 \leq  \int_{\O} |\n u|^2 dx+ C \int_{\de \O}| u|^2 d\s.
\ee
Now we let $u_n$ be a minimizing sequence for $\mu$, normalized as $\|u_n\|_{L^ {2\sharp}(\Gamma_2)}=1$.
We now show that $u=\lim u_n$ is not zero. Put $\th_n:= u_n-u$ so that $ \th_n  \rightharpoonup 0$ in $H^1(\O)$  and
$\th_n\to 0$ in $L^2(\O), L^2(\de \O)$, $L^2(\Gamma_2)$.
Moreover by Brezis-Lieb Lemma \cite{BL} and recalling \eqref{eq:uenorm},   it holds that
\be\label{eq:BrLi}
1- \lim_{n\to \infty}\int_{\Gamma_2}| \th_n|^{2^{\sharp}} d\s= \int_{\Gamma_2}| u|^{2^{\sharp}} d\s.
\ee
By using \eqref{eq:almstST}, we have
\begin{align*}
 \mu_0(\O) \left(\int_{\Gamma_2}| u|^{2^{\sharp}} d\s\right)^{2/2^{\sharp} }
 &\leq  \int_{\O} |\n  u|^2dx  + \int_{\Gamma_2} h(\s)| {u}|^2d\s\\\
 &\leq  \int_{\O} |\n  u_n|^2dx+  \int_{\Gamma_2} h(\s) | {u_n}|^2d\s
 - \int_{\O} |\n  \th_n|^2dx+o(1)\\\
 & \leq  \int_{\O} |\n  u_n|^2dx+\int_{\Gamma_2} h(\s) | {u_n}|^2d\s 
-S_{N,0}  \left(\int_{\Gamma_2}| \th_n|^{2^{\sharp}} d\s\right)^{2/2^{\sharp} } +o(1)\\\
&\leq  \mu_0(\O) -  S_{N,0}  \left(\int_{\Gamma_2}| \th_n|^{2^{\sharp}} d\s\right)^{2/2^{\sharp} }+o(1).
\end{align*}
We take the limit as $n\to \infty$ and use \eqref{eq:BrLi} to get
\begin{eqnarray*}
 \mu_0(\O) \left(\int_{\Gamma_2}| u|^{2^{\sharp}} d\s\right)^{2/2^{\sharp} } \leq \mu_0(\O) -
 S_{N,0}  \left(1- \int_{\Gamma_2}| u|^{2^{\sharp}} d\s\right)^{2/2^{\sharp} }.
\end{eqnarray*}
Thanks to the concavity of the function $t\mapsto t^{2/2^{\sharp}  }$, the above implies that $$ \int_{\Gamma_2}| u|^{2^{\sharp}} d\s\geq 1$$ whenever
$\mu < S_{N,0}$.  This completes  the proof.
\end{proof}
\section{Proof of the main Result}\label{Sect4}
The proof of Theorem \ref{Theorem1.1} is a direct consequence of Proposition \ref{Prop1}, Proposition \ref{prop:exist_smO} and  Proposition \ref{prop:exist_smOs0}.

\end{document}